\def \dd {\partial}
\def \eps {\varepsilon}
\DeclareMathOperator{\Ric}{Ric}
\DeclareMathOperator{\Hess}{Hess}
\DeclareMathOperator{\diam}{diam}
\DeclareMathOperator{\vol}{vol}
\DeclareMathOperator{\diver}{div}
\DeclareMathOperator{\area}{area}
\newcommand{\be}{\begin{equation}}
\newcommand{\ee}{\end{equation}}
\newcommand{\ba}{\begin{eqnarray}}
\newcommand{\ea}{\end{eqnarray}}
\newcommand{\ban}{\begin{eqnarray*}}
\newcommand{\ean}{\end{eqnarray*}}
\title{First Eigenvalue of the $p$-Laplacian under integral curvature condition}
\author{Shoo Seto}
\address{Department of Mathematics\\
         University of California\\
         Santa Barbara, CA 93106}
\email{\href{mailto:shoseto@ucsb.edu}{shoseto@ucsb.edu}}
\thanks{Partially support by Simons Travel Grant}
\author{Guofang Wei}
\address{Department of Mathematics\\
         University of California\\
         Santa Barbara, CA 93106}
\email{\href{mailto:wei@math.ucsb.edu}{wei@math.ucsb.edu}}
\thanks{Partially supported by NSF DMS 1506393}
\date{}
\keywords{$p$-Laplacian, first eigenvalue, integral Ricci curvature}
\theoremstyle{definition}
\newtheorem{lemma}{Lemma}[section]
\newtheorem{proposition}{Proposition}[section]
\newtheorem{theorem}{Theorem}[section]
\numberwithin{equation}{section}
\begin{document}
\maketitle

\begin{abstract}
We give various estimates of the first eigenvalue of the $p$-Laplace operator on closed Riemannian manifold with integral curvature conditions.
\end{abstract}

\section{Introduction}
On a compact Riemannian manifold $(M^n,g)$,  for $1<p< \infty$, the  $p$-Laplacian is defined by
\begin{equation}
\Delta_p(f) := \diver(|\nabla f|^{p-2}\nabla f). \label{p-Lap-def}
\end{equation}
It is a second order quasilinear elliptic operator and when $p=2$ it is the usual Laplacian. The  $p$-Laplacian has applications in many
different contexts from game theory to mechanics and image processing.   Corresponding to the $p$-Laplacian, we have the eigenvalue equation 
\begin{equation}\label{eigen-eq}
\begin{cases}
\Delta_p(f) = -\lambda |f|^{p-2}f & \text{ on } M \\
\nabla_{\nu}f \equiv 0 \text{ (Neumann) or }
f \equiv 0 \text{ (Dirichlet)} & \text{ on }\dd M
\end{cases} 
\end{equation}
where $\nu$ is the outward normal on $\dd M$.  The first nontrivial Neumann eigenvalue for $M$ is given by
\begin{equation}
\mu_{1,p}(M) = \inf\left\{ \frac{\int_M |\nabla f|^p}{\int_M |f|^p} \ | \ f \in W^{1,p}(M)\backslash \{0\}, \int_M|f|^{p-2}f = 0 \right\}
\end{equation}
and the first Dirichlet eigenvalue of $M$ is given by
\begin{equation}  \label{lambda}
\lambda_{1,p}(M) = \inf\left\{ \frac{\int_M |\nabla f|^p}{\int_M |f|^p} \ | \ f \in W^{1,p}_c(M)\backslash \{0\} \right\}.
\end{equation}

Though the regularity theory of the p-Laplacian is very different from the usual Laplacian, 
many of the estimates for the first eigenvalue of the Laplacian (when $p=2$) can be generalized to general $p$.  Matei  \cite{matei} generalized  Cheng's first Dirichlet eigenvalue comparison of balls \cite{cheng} to the $p$-Laplacian. 
 For closed manifolds with Ricci curvature bounded below by $(n-1)K$,  Matei for $K>0$ \cite{matei},
 Valtora for $K=0$ \cite{valtorta} and Naber-Valtora for general $K\in \mathbb{R}$ \cite{naber-valtorta} give a sharp lower bound for the first nontrivial eigenvalue. Andrews-Clutterbuck \cite{andrews-survey},\cite{andrews-clutterbuck} also gave a proof using modulus of continuity argument. L.F. Wang \cite{wang} considered the case when the Bakry-Emery curvature has a positive lower bound for weighted $p$-Laplacians. Recently Y.-Z. Wang and H.-Q. Li \cite{wang-li} extended the estimates to smooth metric measure space and  Cavalletti-Mondino \cite{cavalletti-mondino} to general  metric measure space.  For a general reference on the $p$-Laplace equation, see \cite{lindqvist}. See also \cite{Zhang2007} and references in the paper for  related lower bound estimates.

In this paper, we extend the first eigenvalue estimates for $p$-Laplacian given in \cite{matei} to the integral Ricci curvature setting.

For each $x\in M^n$ let $\rho\left( x\right) $ denote the smallest
eigenvalue for the Ricci tensor $\mathrm{Ric}:T_{x}M\rightarrow
T_{x}M,$ and $\Ric_-^K(x) = \left( (n-1)K - \rho
(x)\right)_+ = \max \left\{ 0, (n-1)K - \rho (x) \right\}$, the amount of Ricci curvature lying below $(n-1)K$. Let
\be \| \Ric_-^K \|_{q, R} = \sup_{x\in M} \left( \int_{B\left( x,R\right) } (\mathrm{Ric}_-^K)^{q}\,
d vol\right)^{\frac 1q}. \ee
Then  $\| \mathrm{Ric}_-^K \|_{q,R} $ measures the amount of Ricci
curvature lying below a given bound, in this case,  $(n-1)K$, in the $L^q$ sense. Clearly $\|
\mathrm{Ric}_-^K \|_{q, R} = 0$ iff $\Ric_M \ge (n-1)K$.  Denote the limit as $R\to \infty$ by $\|\Ric_-^K\|_q$, which is a global curvature invariant.  The Laplace and volume comparison, the basic tools for manifolds with pointwise Ricci curvature lower bound,  have been extended to integral Ricci curvature bound \cite{petersen-wei97}, see Theorem~\ref{com-int}. 

We denote $\|f\|_{q,\Omega}^*$ the normalized $q$-norm on the domain $\Omega$. Namely
\[
\|f\|_{q, \Omega}^* = \left(\frac{1}{\vol(\Omega)}\int_\Omega |f|^q \right)^{\frac 1q}.  \]


Under the assumption that the integral Ricci curvature is controlled ($\|\Ric_-^K\|_q^*$ is small), we give the following first eigenvalue estimates:

\begin{theorem}[Cheng-type estimate]\label{Cheng}
Let $(M^n,g)$ be a complete Riemannian manifold. For any $x_0\in M$, $K\in \mathbb{R}$,  $r>0$,  $p> 1$, $q >\frac{n}{2}$, denote  $\bar{q} = \max\{q, \frac p2 \}$,  there exists an $\eps = \eps(n,p,\bar{q},K,r)$ such that  if $\partial B(x_0, r) \not= \emptyset$ and $\|\Ric_-^K\|_{\bar{q}, B(x_0, r)}^* <\eps$,   then
\begin{equation*}
\lambda_{1,p}(B(x_0,r))\leq \bar{\lambda}_{1,p}(B_K(r))+ C(n,p,\bar{q},K,r ) \left( \|\Ric_-^K\|_{\bar{q}, B(x_0, r)}^*\right)^{\frac 12},
\end{equation*}
where $\mathbb{M}^n_K$ is the complete simply connected space of constant curvature $K$, $B_K(r) \subset \mathbb{M}^n_K$ is the ball of radius $r$ and $\bar\lambda_{1,p}$ is the first Dirichlet eigenvalue of the $p$-Laplacian in the model space $\mathbb{M}^n_K$.
\end{theorem}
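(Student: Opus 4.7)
The plan is to follow Cheng's classical test-function argument, with the Petersen-Wei integral volume comparison replacing the pointwise Bishop-Gromov. Let $\bar f$ be the first Dirichlet eigenfunction of the $p$-Laplacian on the model ball $B_K(r)\subset \mathbb{M}^n_K$: it is radial, nonnegative, decreasing in the distance $s$ to the center, and vanishes on $\partial B_K(r)$. Because $\partial B(x_0, r)\neq \emptyset$, the pullback $f(x) := \bar f(d(x_0, x))$ lies in $W^{1,p}_c(B(x_0, r))$ with $|\nabla f|(x) = |\bar f'(d(x_0, x))|$ almost everywhere. Using $f$ in the variational characterization (\ref{lambda}) and passing to geodesic polar coordinates at $x_0$ (extending the Jacobian $\mathcal{J}(s,\theta)$ by zero past the cut locus), I would write
\begin{equation*}
\lambda_{1,p}(B(x_0, r)) \le \frac{I_1}{I_2},\qquad I_1 := \int_0^r |\bar f'(s)|^p A(x_0, s)\, ds,\quad I_2 := \int_0^r |\bar f(s)|^p A(x_0, s)\, ds,
\end{equation*}
where $A(x_0, s) := \int_{S^{n-1}_{x_0}} \mathcal{J}(s,\theta)\, d\theta$, and correspondingly $\bar\lambda_{1,p}(B_K(r)) = \bar I_1/\bar I_2$ defined using the model area $A_K(s)$.

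The elementary identity
\begin{equation*}
\frac{I_1}{I_2} - \frac{\bar I_1}{\bar I_2} = \frac{I_1 - \bar I_1}{I_2} \;-\; \bar\lambda_{1,p}(B_K(r))\,\frac{I_2 - \bar I_2}{I_2}
\end{equation*}
reduces the problem to two tasks: estimate $|I_i - \bar I_i|$ by $\bigl(\|\Ric_-^K\|_{\bar q, B(x_0, r)}^*\bigr)^{1/2}\,\bar I_2$, and verify $I_2 \ge \tfrac12 \bar I_2$ once $\eps$ is taken small. Since
\begin{equation*}
I_i - \bar I_i = \int_0^r g_i(s)\bigl(A(x_0, s) - A_K(s)\bigr)\, ds,\qquad g_1 = |\bar f'|^p,\ g_2 = |\bar f|^p,
\end{equation*}
I would invoke the Petersen-Wei integral Jacobian comparison (Theorem~\ref{com-int}), which controls the pointwise deviation of $\mathcal{J}(s,\theta)^{1/(n-1)}$ from $\mathcal{J}_K(s)^{1/(n-1)}$ in $L^{2q}(B(x_0,r))$ by $C(n,q,K,r)\bigl(\|\Ric_-^K\|_{q, B(x_0, r)}^*\bigr)^{1/2}$, and then apply H\"older's inequality against the radial weights $g_i$. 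The exponent $\bar q = \max\{q, p/2\}$ enters precisely here: pairing $g_1 = |\bar f'|^p$ with the $L^{2\bar q}$-controlled Jacobian deviation through H\"older duality forces $2\bar q \ge p$, while $q > n/2$ is the hypothesis required for Theorem~\ref{com-int}.

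The principal obstacle is this exponent bookkeeping: the Petersen-Wei estimate supplies only $L^{2q}$-type integral control on the Jacobian defect, so extending Cheng's argument beyond $p=2$ requires the auxiliary exponent $\bar q$ and a careful H\"older split whose constant $C(n,p,\bar q, K, r)$ must absorb the $L^\infty$ bounds for $\bar f$ and $\bar f'$ on $[0,r]$ obtained from ODE analysis of the radial model eigenfunction. The remaining technical points (passage through the cut locus, limited $C^{1,\alpha}$ regularity of $\bar f$, and the continuity argument that turns smallness of $\|\Ric_-^K\|_{\bar q}^*$ into $I_2 \ge \tfrac12 \bar I_2$) are handled by standard extension-by-zero and $\eps$-smallness. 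In the limit $\|\Ric_-^K\|_{\bar q}^* \to 0$ one recovers Matei's Cheng-type inequality verbatim, providing a useful sanity check.
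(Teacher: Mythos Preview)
Your approach has a genuine gap: the step ``$I_2 \ge \tfrac12 \bar I_2$ once $\eps$ is small'' cannot be carried out. Integral (or even pointwise) Ricci lower bounds give only an \emph{upper} bound on the volume element, never a lower one. Take $M = \mathbb{R}\times S^1_\delta$ with the flat product metric: here $\Ric\equiv 0$, so $\|\Ric_-^0\|^*_{\bar q}=0$ for every $\bar q$, yet for fixed $r$ one has $I_2/\bar I_2 \to 0$ as $\delta\to 0$. Your identity $\tfrac{I_1}{I_2}-\tfrac{\bar I_1}{\bar I_2} = \tfrac{I_1-\bar I_1}{I_2} - \bar\lambda\,\tfrac{I_2-\bar I_2}{I_2}$ therefore produces a second term with $I_2-\bar I_2$ potentially of size $-\bar I_2$ and a denominator $I_2$ that may be tiny, so there is no control. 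Relatedly, Theorem~\ref{com-int} does not say what you attribute to it: it bounds $\psi = (\Delta r - \bar\Delta^K r)_+$ in $L^{2q}$, which is a one-sided Laplacian defect, not a two-sided $L^{2q}$ bound on the Jacobian deviation $\mathcal{J}-\mathcal{J}_K$.

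The paper's proof sidesteps this entirely by never comparing $M$-volumes to model volumes. It integrates by parts on $B=B(x_0,r)$ and uses the $p$-Laplacian comparison $\Delta_p\bar f \ge \bar\Delta^K_p\bar f + \psi\,\bar f'|\bar f'|^{p-2}$ (valid because $\bar f'\le 0$) to obtain
\[
Q:=\frac{\int_B|\nabla\bar f|^p}{\int_B|\bar f|^p} \;\le\; \bar\lambda_{1,p}(B_K(r)) \;+\; \frac{\int_B \psi\,|\bar f'|^{p-1}}{\int_B|\bar f|^p}.
\]
The error is then controlled by H\"older against $\|\psi\|^*_{2\bar q,B}$ (this is where $2\bar q\ge p$ enters) and by the \emph{relative} volume doubling $\vol B(x_0,r)/\vol B(x_0,r_0)\le 2\,\vol B_K(r)/\vol B_K(r_0)$, both of which are internal to $M$ and unaffected by the scale of $\vol B(x_0,r)$. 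A short bootstrap via Young's inequality removes the residual factor $Q^{1-1/p}$. If you want to repair your argument, the route is to replace the additive splitting $I_i-\bar I_i$ by a comparison that stays on $M$---either the integration-by-parts route above, or a Cheng-type monotonicity argument for the ratio $A(x_0,s)/A_K(s)$ up to an $L^{2\bar q}$-controlled error in $\psi$.
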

This generalizes the Dirichlet $p$-Laplacian first eigenvalue comparison in \cite{matei}. When $p = 2$, this is proved in \cite{petersen-sprouse}.

\begin{theorem}[Lichnerowicz-type estimate]  \label{N-eigen-int}
	Let $(M^n,g)$ be a complete Riemannian manifold.  For $q > \frac{n}{2}$, $p \geq 2$ and $K>0$, there exists $\eps = \eps(n,p,q,K)$ such that if $ \|\Ric_-^K\|^*_q \le \eps$, then 
	\begin{equation}
	\mu_{1,p}^{\frac{2}{p}} \geq \frac{\sqrt{n}(p-2)+n}{(p-1)(\sqrt{n}(p-2)+n-1)}
	\left[(n-1)K-2 \|\Ric_-^K\|^*_q\right].  \label{N-eigen}
	\end{equation}
 In particular, when $\Ric \ge (n-1)K$, we have 
	\begin{equation}
	\mu_{1,p}^{\frac{2}{p}} \geq \frac{\sqrt{n}(p-2)+n}{\sqrt{n}(p-2)+n-1} \cdot \frac{(n-1)K}{p-1} \ge \frac{(n-1)K}{p-1}.  \label{explicit-est} 
	\end{equation}
\end{theorem}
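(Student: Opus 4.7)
My plan is to carry out Matei's proof of the sharp Lichnerowicz bound for the $p$-Laplacian under pointwise $\Ric\ge(n-1)K$, treating $\Ric_-^K$ as a perturbation whose contribution to the integrated Bochner identity can be absorbed once $\|\Ric_-^K\|_q^*$ is sufficiently small. The absorption step relies on an $L^\infty$ gradient estimate for the first eigenfunction, which I would produce by Moser iteration using the Sobolev inequality available in the integral Ricci setting (a consequence of Theorem~\ref{com-int}).

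Let $f$ be a first nontrivial Neumann eigenfunction of $\Delta_p$, normalized by $\max_M|f|=1$. I would start from the $p$-Bochner identity
\begin{equation*}
\tfrac{1}{p}\Delta|\nabla f|^p = (p-2)|\nabla f|^{p-4}|\Hess f\cdot\nabla f|^2 + |\nabla f|^{p-2}\bigl[|\Hess f|^2+\Ric(\nabla f,\nabla f)+\lp\nabla f,\nabla\Delta f\rp\bigr],
\end{equation*}
regularize the degeneracy of $\Delta_p$ in the standard way, integrate, and use $\Delta_pf=-\mu_{1,p}|f|^{p-2}f$. The refined Kato-type inequality producing Matei's sharp constant $\frac{\sqrt n(p-2)+n}{(\sqrt n(p-2)+n-1)^2}$ on the Hessian terms, together with Cauchy-Schwarz on the eigenvalue term, reproduces the chain of estimates in \cite{matei}, except that the pointwise bound $\Ric(\nabla f,\nabla f)\ge(n-1)K|\nabla f|^2$ is replaced by $\Ric(\nabla f,\nabla f)\ge\bigl[(n-1)K-\Ric_-^K\bigr]|\nabla f|^2$. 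The outcome is
\begin{equation*}
\mu_{1,p}^{2/p}\;\ge\;\frac{\sqrt n(p-2)+n}{(p-1)(\sqrt n(p-2)+n-1)}\cdot\frac{\int_M\bigl[(n-1)K-\Ric_-^K\bigr]|\nabla f|^{2(p-1)}}{\int_M|\nabla f|^{2(p-1)}}.
\end{equation*}

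It thus suffices to prove that, for $\eps$ small,
\begin{equation*}
\int_M\Ric_-^K|\nabla f|^{2(p-1)} \le 2\,\|\Ric_-^K\|_q^*\int_M|\nabla f|^{2(p-1)}.
\end{equation*}
H\"older's inequality gives $\int\Ric_-^K|\nabla f|^{2(p-1)}\le\|\Ric_-^K\|_q\cdot\bigl\||\nabla f|^{2(p-1)}\bigr\|_{q/(q-1)}$, so what is needed is an $L^{q/(q-1)}$-to-$L^1$ reverse H\"older bound for $|\nabla f|^{2(p-1)}$. I would obtain this from a Moser iteration applied to the subsolution $|\nabla f|^{p-1}$, extracted by discarding the non-negative Hessian terms in the Bochner identity above, yielding $\|\nabla f\|_\infty\le C(n,p,q,K)\,\mu_{1,p}^{1/p}$ with $C$ uniform under $\|\Ric_-^K\|_q^*\le\eps_0$. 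This gradient estimate is the main technical obstacle: running Moser iteration in the integral Ricci regime requires a Sobolev inequality whose constants depend only on $(n,q,K,\|\Ric_-^K\|_q^*)$, which follows from Theorem~\ref{com-int} together with the Dai-Wei-Zhang-type Sobolev inequality for integral Ricci curvature. Once this is in place, choosing $\eps\le\eps_0$ small enough gives~\eqref{N-eigen}; specializing to $\|\Ric_-^K\|_q^*=0$ and using $\sqrt n(p-2)+n\ge\sqrt n(p-2)+n-1$ yields~\eqref{explicit-est}.
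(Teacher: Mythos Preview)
Your overall strategy---integrate the $p$-Bochner formula, use the eigenvalue equation, and treat $\Ric_-^K$ as a perturbation to be absorbed---matches the paper's, but the absorption mechanism you propose is quite different from, and considerably heavier than, what the paper actually does. The paper never appeals to Moser iteration or to an $L^\infty$ gradient bound. Instead, after H\"older it applies a single Sobolev inequality (available under small $\|\Ric_-^K\|_q^*$ via Gallot and Aubry) directly to $|\nabla f|^{p/2}$:
\[
\left(\fint_M|\nabla f|^{\frac{pq}{q-1}}\right)^{\frac{q-1}{q}}\le C_s\,\frac{p^2}{4}\fint_M|\nabla f|^{p-2}\bigl|\nabla|\nabla f|\bigr|^2+2\fint_M|\nabla f|^p.
\]
The key observation is that the ``bad'' gradient term on the right already appears in the integrated Bochner identity with coefficient $(p-2)$; choosing $\|\Ric_-^K\|_q^*$ small enough that $(p-2)-C_s\|\Ric_-^K\|_q^*\,p^2/4\ge 0$ lets one simply \emph{drop} it. This is precisely why the hypothesis is $p>2$ (the case $p=2$ is handled separately via Aubry) and why the explicit constant $2$ appears in \eqref{N-eigen}: it is the $2$ from the Sobolev inequality. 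The paper's computation also stays with the weight $|\nabla f|^{p}$ throughout, via $\fint|\nabla f|^{p-2}\lp\nabla f,\nabla\Delta f\rp=-\fint\Delta_pf\,\Delta f$ and $\fint\Delta_pf\,\Delta f\le(p-1)\mu^{2/p}\fint|\nabla f|^p$, rather than the $|\nabla f|^{2(p-1)}$ in your intermediate inequality.

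Your route through Moser iteration has a genuine gap at the reverse H\"older step. An $L^\infty$ bound $\|\nabla f\|_\infty\le C\mu^{1/p}$ gives at best $\int\Ric_-^K|\nabla f|^{2(p-1)}\le C(n,p,q,K)\,\|\Ric_-^K\|_q^*\int|\nabla f|^{2(p-1)}$ with some Moser constant $C$, not $2$; to force $C=2$ you would need $\|\nabla f\|_\infty^{2(p-1)}\le 2\fint|\nabla f|^{2(p-1)}$, i.e.\ that $|\nabla f|$ is essentially constant, which is far too strong. So your argument would yield a qualitatively similar but not the stated estimate, and at the cost of running Moser iteration on $|\nabla f|^{p-1}$ for a $p$-Laplacian eigenfunction---a step with its own regularity subtleties that the paper's one-line Sobolev absorption avoids entirely.
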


Under these assumption, Aubry's diameter estimate implies that $M$ is closed \cite[]{aubry}. That paper also has the proof for $p=2$. 

The explicit estimate \eqref{explicit-est} improves the estimate in \cite[Theorem 3.2]{matei}, where it is shown that $ (\mu_{1,p})^{\frac{2}{p}} \geq \frac{(n-1)K}{p-1}$. When $p=2$, the estimate \eqref{explicit-est} recovers the Lichnerowicz estimate that $\mu_{1,2} \ge nK$.  The explicit  estimate \eqref{N-eigen} is optimal when $p=2$, but not optimal when $p>2$. For optimal estimate we have the following 
Lichnerowicz-Obata-type estimate. 
\begin{theorem}[Lichnerowicz-Obata-type estimate]  \label{optimal-est}
	Let $M^n$ be a complete Riemannian manifold.  Then for any $\alpha >1$, $K>0$, $q> \frac n2$ and any $p>1$, there is an $\eps=\eps(n,p,q,\alpha,K) >0$ such that if $\|\Ric_-^K\|_q^* \leq \eps$, then 
	\begin{equation*}
	\alpha\mu_{1,p}(M) \geq \mu_{1,p}(\mathbb M_K^n).
	\end{equation*}
\end{theorem}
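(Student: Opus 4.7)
My plan is to adapt Naber--Valtorta's sharp gradient comparison argument for the first $p$-Laplace eigenfunction \cite{naber-valtorta} to the integral curvature setting, in the same spirit as Petersen--Wei \cite{petersen-wei97} and Aubry \cite{aubry} treat their respective sharp pointwise comparisons. In the pointwise case $\Ric\geq(n-1)K$, one normalizes the first eigenfunction $u$ so that $\min u=-1$ and $\max u\leq 1$, introduces the one-dimensional model eigenfunction $w$ on the appropriate weighted interval at the model eigenvalue $\mu_{1,p}(\mathbb{M}_K^n)$, and uses the $p$-Bochner formula to establish the pointwise gradient comparison $|\nabla u|(x)\leq w'(w^{-1}(u(x)))$. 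A rearrangement argument combined with Bishop--Gromov then yields $\mu_{1,p}(M)\geq \mu_{1,p}(\mathbb{M}_K^n)$. The task here is to upgrade each pointwise ingredient to its integral analogue with a small error controlled by $\|\Ric_-^K\|_q^*$.

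\textbf{Key steps.} First invoke \cite{aubry} (referenced just after Theorem~\ref{N-eigen-int}) to ensure that when $\varepsilon$ is small, $M$ is closed with diameter close to $\pi/\sqrt{K}$ and integral Sobolev constants and volume ratios are uniformly controlled through Theorem~\ref{com-int}. Set $P=|\nabla u|^p-\varphi(u)$ with $\varphi(s)=w'(w^{-1}(s))^p$ and apply the $p$-Bochner identity to the linearized $p$-Laplacian $\Delta_p^u$ acting on $P$. In the pointwise case the sign principle gives $P\leq 0$ directly; under the integral hypothesis the Ricci deficit $\Ric_-^K$ enters as an inhomogeneity, giving schematically
\begin{equation*}
\Delta_p^u P \geq -C\,\Ric_-^K\,|\nabla u|^{2(p-1)} \qquad \text{on } \{P>0\}.
\end{equation*}
Apply Moser iteration to $P_+$ using the integral Sobolev inequality (available when $\|\Ric_-^K\|_q^*$ is small and $q>n/2$) together with the standard $L^\infty$ bound on $|\nabla u|$ for $p$-eigenfunctions to obtain the approximate gradient comparison $|\nabla u|^p \leq w'(w^{-1}(u))^p + \eta(\varepsilon)$ with $\eta(\varepsilon)\to 0$ as $\varepsilon\to 0$. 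Feeding this into the Naber--Valtorta rearrangement/volume-comparison step, using the integral volume comparison in place of Bishop--Gromov to compare the distribution function of $u$ with that of $w$, produces $\mu_{1,p}(M)\geq (1-\delta(\varepsilon))\mu_{1,p}(\mathbb{M}_K^n)$ for some $\delta(\varepsilon)\to 0$. Choosing $\varepsilon$ so that $\delta(\varepsilon)\leq 1-\alpha^{-1}$ yields the claimed inequality.

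\textbf{Main obstacle.} The hard part is the Moser iteration for the gradient comparison. The linearized operator $\Delta_p^u$ degenerates at critical points of $u$, and its coefficients involve $|\nabla u|^{p-2}$, so one must work on $\{|\nabla u|>\tau\}$ with a suitable regularization of $P$ and control the resulting constants carefully as $\tau\to 0$. The Moser scheme must be set up with the degenerate weight $|\nabla u|^{p-2}$, so the Sobolev inequality needed for the iteration has to be verified in this weighted form; combining this with the integral Ricci control, and handling the $p>2$ and $1<p<2$ regimes separately, is the technical heart of the proof. A possible alternative route, avoiding Moser iteration, is a compactness--contradiction argument using Aubry's diameter bound and the sharp $p$-Laplacian eigenvalue estimate on $\mathrm{RCD}((n-1)K,n)$ spaces due to Cavalletti--Mondino \cite{cavalletti-mondino}, but this requires continuity of $\mu_{1,p}$ under measured Gromov--Hausdorff convergence of $\mathrm{RCD}$ limits, which is a nontrivial input.
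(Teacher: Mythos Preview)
Your proposal follows a very different route from the paper, and the route you chose is substantially harder. The paper does \emph{not} touch the Naber--Valtorta gradient comparison at all for Theorem~\ref{optimal-est}. Instead, it deduces Theorem~\ref{optimal-est} from the Faber--Krahn-type estimate (Theorem~\ref{Faber-Krahn}) via the standard nodal-domain argument of Matei: if $f$ is a first Neumann eigenfunction of $\Delta_p$ on $M$ with nodal domains $A_\pm$, then $\mu_{1,p}(M)=\lambda_{1,p}(A_+)=\lambda_{1,p}(A_-)$, and on $\mathbb{M}_K^n$ the nodal domains are hemispheres, so $\mu_{1,p}(\mathbb{M}_K^n)=\lambda_{1,p}(S^n_{K,\pm})$. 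Applying Faber--Krahn to whichever of $A_\pm$ has at most half the volume gives $\alpha^p\lambda_{1,p}(A_\pm)\geq\lambda_{1,p}(B_K)\geq\lambda_{1,p}(S^n_{K,\pm})$. The Faber--Krahn inequality itself is obtained by the classical symmetrization (co-area plus H\"older), with the single modification that the Gromov--Levy isoperimetric inequality is replaced by its integral-curvature version (Proposition~\ref{gromov-levy}), which in turn follows from the Petersen--Sprouse tubular-neighborhood volume comparison (Proposition~\ref{petersen-sprouse}). The factor $\alpha$ enters only through the isoperimetric step and propagates cleanly; no $p$-Bochner formula, no Moser iteration, and no analysis of the linearized operator $\Delta_p^u$ are needed.

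By contrast, your plan has a genuine gap precisely at the point you flag as the ``main obstacle'': you never actually carry out the weighted Moser iteration for $P_+$, and the issues you list (degeneracy of $\Delta_p^u$ at critical points, the need for a Sobolev inequality in the weight $|\nabla u|^{p-2}$, separate treatment of $1<p<2$ and $p>2$) are real and not resolved by anything in the paper or its references. In particular, the $L^\infty$ gradient bound for $p$-eigenfunctions you invoke is not uniform in the relevant parameters without further argument, and the passage from the schematic differential inequality to a quantitative $\eta(\varepsilon)\to 0$ is not supplied. Your alternative compactness/$\mathrm{RCD}$ route would work in principle, but, as you note, it relies on continuity of $\mu_{1,p}$ under measured Gromov--Hausdorff convergence, which is a nontrivial external input. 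The paper's isoperimetric/symmetrization approach sidesteps all of this and yields the result with essentially elementary tools once Proposition~\ref{petersen-sprouse} is in hand.
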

When $\|\Ric_-^K\|_q^* = 0$, we can take $\alpha =1$ and this gives Theorem~3.1 in \cite{matei}. 

This result is obtained from the following Faber-Krahn type estimate. Recall the classical Faber-Krahn inequality asserts that  in $\mathbb R^n$ balls (uniquely) minimize the first eigenvalue of the Dirichlet-Laplacian among sets with given volume.
\begin{theorem}[Faber-Krahn-type estimate]\label{Faber-Krahn}
Under the same set up as in Theorem~\ref{optimal-est},  let $\Omega \subset M$ be a domain and $B_K \subset \mathbb M_K^n$ be a geodesic ball in the model space such that
\begin{equation*}
\frac{\vol(\Omega)}{\vol(M)} = \frac{\vol(B_K)}{\vol(M^n_K)}.
\end{equation*}
Then 
\begin{equation*}
\alpha^p\lambda_{1,p}(\Omega) \geq \lambda_{1,p}(B_K).
\end{equation*}
\end{theorem}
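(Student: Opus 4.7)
The plan is to mimic the classical Schwarz-symmetrization proof of the Faber-Krahn inequality on the model space $\mathbb{M}_K^n$, replacing the sharp Lévy-Gromov isoperimetric comparison with an almost-sharp version valid under a small integral Ricci curvature assumption, and then absorbing the resulting small loss into the factor $\alpha^p$.

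First, fix a nonnegative first Dirichlet eigenfunction $u$ of $\Delta_p$ on $\Omega$ with eigenvalue $\lambda_{1,p}(\Omega)$, set $V(t) = \vol(\{u > t\})$, and let $u^\sharp : B_K \to \mathbb{R}_{\ge 0}$ be the non-increasing radial rearrangement calibrated so that
\[
\frac{\vol(\{u^\sharp > t\})}{\vol(\mathbb{M}_K^n)} = \frac{V(t)}{\vol(M)}, \qquad t \ge 0.
\]
The volume-matching hypothesis on $\Omega$ and $B_K$ ensures $\mathrm{supp}\, u^\sharp \subset B_K$, and the layer cake formula gives
\[
\frac{1}{\vol(\mathbb{M}_K^n)}\int_{B_K} |u^\sharp|^p = \frac{1}{\vol(M)} \int_\Omega |u|^p.
\]

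Next, applying the coarea formula and Hölder's inequality in the standard way,
\[
\int_\Omega |\nabla u|^p \;\ge\; \int_0^{\sup u} \frac{\mathrm{Per}(\{u > t\})^p}{(-V'(t))^{p-1}}\, dt,
\]
with equality on the model for the radial $u^\sharp$. The crucial input is an almost Lévy-Gromov isoperimetric inequality in the integral Ricci setting: under $\|\Ric_-^K\|_{q}^* \le \eps$ there exists $\eta = \eta(n,q,K,\eps) \to 0$ as $\eps \to 0$ such that for every Borel $U \subset M$,
\[
\frac{\mathrm{Per}(U)}{\vol(M)} \;\ge\; (1-\eta)\, \frac{\mathrm{Per}(U^\sharp)}{\vol(\mathbb{M}_K^n)},
\]
where $U^\sharp \subset \mathbb{M}_K^n$ is the geodesic ball with $\vol(U^\sharp)/\vol(\mathbb{M}_K^n) = \vol(U)/\vol(M)$. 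Inserting this pointwise-in-$t$ into the coarea bound on the super-level sets of $u$ yields
\[
\frac{1}{\vol(M)}\int_\Omega |\nabla u|^p \;\ge\; (1-\eta)^p\, \frac{1}{\vol(\mathbb{M}_K^n)}\int_{B_K} |\nabla u^\sharp|^p.
\]
Dividing by the matched $L^p$ identity and using $u^\sharp$ as a test function for $\lambda_{1,p}(B_K)$ gives $\lambda_{1,p}(\Omega) \ge (1-\eta)^p\, \lambda_{1,p}(B_K)$, and the theorem follows by choosing $\eps = \eps(n,p,q,\alpha,K)$ so small that $(1-\eta)^{-p} \le \alpha^p$, which is possible because $\alpha > 1$.

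The main obstacle is the almost-sharp isoperimetric inequality: the sharp Lévy-Gromov comparison requires a pointwise Ricci bound, and in the integral setting it must be recovered with a controlled loss. One route is to deduce it from the integral Laplacian and Bishop-Gromov comparison results (Theorem~\ref{com-int}) via a segment-inequality argument in the style of Aubry; alternatively, for $K>0$ one can invoke the stability of Lévy-Gromov in the $\mathrm{RCD}$ framework of Cavalletti-Mondino. Everything else is the standard Schwarz-symmetrization computation transplanted to $\mathbb{M}_K^n$, and the Pólya-Szegő chain proceeds uniformly in $p$ because the Hölder step in the coarea argument is $L^p$ scale-invariant.
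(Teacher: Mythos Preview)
Your proposal is correct and follows the same Schwarz-symmetrization scheme as the paper: rearrange onto $\mathbb{M}_K^n$ with matched volume fraction, apply the coarea formula together with H\"older on level sets, and invoke an almost-sharp L\'evy--Gromov isoperimetric comparison whose defect is absorbed into the factor $\alpha^p$. Two differences are worth noting. First, the paper obtains the needed almost-isoperimetric inequality (Proposition~\ref{gromov-levy}) not via Aubry-type segment inequalities or RCD stability, but directly from the Petersen--Sprouse tube volume comparison for constant-mean-curvature hypersurfaces under integral Ricci bounds (Proposition~\ref{petersen-sprouse}), tracking the error through Gromov's original minimizing-hypersurface argument; this is more self-contained in the smooth setting and avoids importing the synthetic machinery. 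Second, the paper works with Morse test functions in the Rayleigh quotient rather than with the first eigenfunction itself, which sidesteps the regularity issue that $p$-Laplacian eigenfunctions are in general only $C^{1,\beta}$, so Sard's theorem does not directly guarantee that their level sets are regular hypersurfaces for almost every value.
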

 Again when $\|\Ric_-^K\|_q^* = 0$, we can take $\alpha =1$ and this gives Theorem~2.1 in \cite{matei}.


To prove these results, since we do not have pointwise Ricci curvature lower bound, one key is to control the error terms.
 
We now give a quick overview of the paper.  In \S 2 we prove the Cheng-type upper bound using the first eigenfunction of $\Delta_p$ for the model case as a test function in the $L^p$-Rayleigh quotient and using the Laplacian comparison and volume doubling for integral curvature (Theorem~2.1) to control the error.  In \S 3, we prove the Lichnerowicz-type lower bound by using the $p$-Bochner formula and the Sobolev inequality.  In \S 4, to prove a Faber-Krahn-type lower bound, a necessary tool we need is an integral curvature version of the Gromov-Levy isoperimetric inequality, which we first show.  The proof of the eigenvalue estimate then follows from an argument using the co-area formula. 

\section{Proof of Theorem \ref{Cheng}}
First we recall the Laplace and volume comparison for integral Ricci curvature proved by the second author joint with Petersen \cite{petersen-wei97, petersen-wei00}.

 Let $M^n$ be a complete Riemannian manifold of dimension $n$. Given $x_0 \in M$, let $r(x) = d(x_0, x)$ be the distance function and $\psi (x) = \left( \Delta r - \bar{\Delta}^K r \right) _{+}$, where $\bar{\Delta}^K$ is the Laplacian on the model space $\mathbb{M}^n_K$. 
 The classical Laplace comparison states that if $\Ric_M \ge (n-1)K$, then $\Delta r \le \bar{\Delta}_K r$, i.e., if $\Ric_-^K \equiv 0$, then $\psi  \equiv 0$. In \cite{petersen-wei97} this is generalized to integral Ricci lower bound.
 
 \begin{theorem}[Laplace and Volume Comparison \cite{petersen-wei97, petersen-wei00}]  \label{com-int}
 	Let $M^n$ be a complete Riemannian manifold of dimension $n$. If $q>\frac{n}{2}$, then
 	\be
 	\label{Laplacian-com-average-norm}
 	\| \psi \|^*_{2q. B(x,r)}  \leq C(n,q) \left( \|
 	\mathrm{Ric}_-^K \|^*_{q, B(x, r)} \right)^{\frac 12}. \ee
 	There exists $\varepsilon=\varepsilon(n,q,K,r)>0$ such that, if $\|\Ric_-^K\|_{q, B(x, r)}^* \le\varepsilon,$
 	then
 	\begin{equation}\label{volume doubling}
 	\frac{\vol(B(x,r))}{\vol(B(x,r_0)}\le 2 \frac{\vol B_K(r)}{\vol B_K (r_0)},\, \ \ \forall r_0 \le r.
 	\end{equation}
 \end{theorem}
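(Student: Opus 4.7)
The plan is to derive both statements from a pointwise Riccati analysis of the distance function $r(x)=d(x_0,x)$, with careful bookkeeping so that the Ricci deviation enters only through $\|\Ric_-^K\|_q^*$. Write $m(r,\theta)=\Delta r$ for the radial mean curvature and $\bar m(r)=\bar\Delta^K r$ for its model counterpart, and let $A(r,\theta)$ be the radial volume density ($dV=A\,dr\,d\theta$, $A'=mA$). Bochner's formula applied to $r$ together with the Cauchy--Schwarz bound $|\Hess r|^2\ge (\Delta r)^2/(n-1)$ yields the Riccati inequality $m'+m^2/(n-1)\le -\Ric(\partial_r,\partial_r)$; subtracting the model equality $\bar m'+\bar m^2/(n-1)=-(n-1)K$ and using $\Ric(\partial_r,\partial_r)\ge (n-1)K-\Ric_-^K$ gives, on the set $\{\psi>0\}$,
\[
\psi' + \tfrac{(m+\bar m)\psi}{n-1}\le \Ric_-^K, \qquad \psi=(m-\bar m)_+.
\]

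For \eqref{Laplacian-com-average-norm}, I would multiply by $2q\psi^{2q-1}$ and by $A$, use $A'=mA$ and the substitution $m=\psi+\bar m$ to rewrite $mA\psi^{2q}$ in a controllable form, and obtain along each radial geodesic a differential inequality of the schematic form
\[
\bigl(A\psi^{2q}\bigr)' + c_1(n,q)\,A\psi^{2q+1} + c_2(n,q)\,\bar m\,A\psi^{2q}\le C(n,q)\,A\psi^{2q-1}\Ric_-^K,
\]
where the coefficient $c_1=\tfrac{2q-(n-1)}{n-1}$ is nonnegative thanks to $q>n/2$, so the $\psi^{2q+1}$ term lands on the LHS and can be dropped; the $\bar m$ term is absorbed by bounding $|\bar m|\le C(n,K,r)$ on $[0,r]$ and using a Gronwall-type argument, possibly after an initial integration by parts in $r$. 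Integrating the resulting bound over $\theta\in S^{n-1}$ converts it into an inequality on $B(x,r)$, and Hölder's inequality with conjugate exponents $q$ and $q/(q-1)$ applied to $\int A\psi^{2q-1}\Ric_-^K$ (using the $\psi^{2q+1}$ term or a preliminary $L^{2q+1}$ bootstrap to accommodate the power $(2q-1)q/(q-1)$) produces $\int_{B(x,r)}\psi^{2q}\le C(n,q,K,r)^{2q}\int_{B(x,r)}(\Ric_-^K)^q$. Normalizing by $\vol(B(x,r))$ and taking $(2q)$-th roots gives \eqref{Laplacian-com-average-norm}, the exponent $\tfrac12$ on the right being a direct reflection of the LHS being measured in $L^{2q}$ and the RHS in $L^q$.

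For the volume doubling \eqref{volume doubling}, I would look at the ratio $f(r,\theta)=A(r,\theta)/\bar A(r)$, which satisfies $(\log f)'=m-\bar m\le \psi$, so that $f(r,\theta)\le f(r_0,\theta)\exp\!\bigl(\int_{r_0}^r\psi(s,\theta)\,ds\bigr)$. Applying Hölder in $s$ and the just-proved radial $L^{2q}$ bound on $\psi$ with $2q>n$ controls the exponent uniformly in $\theta$ by a quantity going to $0$ with $\|\Ric_-^K\|_{q,B(x,r)}^*$. Choosing $\eps$ small enough so that this exponent is at most $\log 2$, then integrating $A(s,\theta)\le 2(A(r_0,\theta)/\bar A(r_0))\bar A(s)$ in $s\in[0,r]$ and in $\theta$ (and comparing with $\vol B(x,r_0)=\int\int_0^{r_0}A\,ds\,d\theta$) gives the relative volume estimate, mirroring the classical Bishop--Gromov argument with the integral error absorbed into the factor $2$.

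The main obstacle will be the Hölder step in the Laplacian comparison: the natural pairing of $\psi^{2q-1}$ with $\Ric_-^K$ produces a power of $\psi$ that does not match the absorbable $\psi^{2q+1}$ term on the nose, so one must either run the radial integration as a Gronwall inequality and pick up the $\psi^{2q+1}$ via a separate $L^{2q+1}$ estimate, or (the slicker route) multiply the Riccati inequality by a carefully chosen $\psi^\alpha$ and combine with the $K$-dependent model terms so that the Hölder exponents close up to $q$ and $q/(q-1)$. Managing the sign of $\bar m$ in a neighborhood of the model ball's equator when $K>0$ is the secondary technical point, handled by confining $r$ to an interval on which $|\bar m|$ is a priori bounded in terms of $(n,K,r)$.
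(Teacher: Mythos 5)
The paper does not prove this theorem at all --- it is quoted verbatim from Petersen--Wei \cite{petersen-wei97, petersen-wei00} --- so what you are really doing is reconstructing their proof. Your skeleton (Riccati inequality for $\psi=(\Delta r-\bar\Delta^K r)_+$, multiply by a power of $\psi$ and by the area density $\mathcal A$, absorb the good sign of the $\psi^{2q+1}$-type term, Hölder against $\Ric_-^K$) is the right one. The exponent mismatch you flag as ``the main obstacle'' is resolved exactly by your ``slicker route'': the correct multiplier is $\psi^{2q-2}$, i.e.\ one differentiates $\psi^{2q-1}\mathcal A$ rather than $\psi^{2q}\mathcal A$. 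Then the error term is $\psi^{2q-2}\Ric_-^K\mathcal A$, and Hölder with exponents $\frac{q}{q-1}$ and $q$ raises $\psi^{2q-2}$ precisely to $\psi^{2q}$, which is absorbed by the negative term with coefficient $\frac{2q-1-(n-1)}{n-1}>0$; setting $X=\int_0^r\psi^{2q}\mathcal A$ and $Y=\int_0^r(\Ric_-^K)^q\mathcal A$ one gets $cX\le CX^{(q-1)/q}Y^{1/q}$, hence $X\le CY$, and integrating over $\theta$ and taking $(2q)$-th roots gives \eqref{Laplacian-com-average-norm}. No bootstrap through $L^{2q+1}$ is needed. Since you left the choice of $\alpha$ open, this half of your argument is incomplete as written but is a one-line fix.

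The volume-doubling half has a genuine gap. You claim that $\int_{r_0}^r\psi(s,\theta)\,ds$ is controlled \emph{uniformly in $\theta$} by a quantity tending to $0$ with $\|\Ric_-^K\|^*_{q,B(x,r)}$, and then exponentiate ray by ray. This does not follow: the bound you have on $\psi$ is an integral bound over the ball, weighted by the density $\mathcal A$ (and, per ray, by $\int_0^r(\Ric_-^K)^q\mathcal A\,ds$ along that single ray, which is not controlled by the normalized norm over the ball). A single direction $\theta$ along which the curvature defect concentrates can make $\exp\bigl(\int\psi\bigr)$ arbitrarily large while $\|\Ric_-^K\|^*_q$ stays small, so the pointwise inequality $A(s,\theta)\le 2\,(A(r_0,\theta)/\bar A(r_0))\,\bar A(s)$ cannot be achieved for all $\theta$. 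The Petersen--Wei argument avoids this by integrating over $\theta$ \emph{first}: one shows
$\frac{d}{dr}\bigl(\vol B(x,r)/\vol B_K(r)\bigr)$ is bounded by a multiple of $\bar{\mathcal A}(r)\vol B_K(r)^{-2}\int_{B(x,r)}\psi$, applies Hölder to $\int_{B(x,r)}\psi$ using \eqref{Laplacian-com-average-norm} together with $\vol B(x,r)^{1-\frac{1}{2q}}$, and integrates the resulting differential inequality for the ratio $V/\bar V$; smallness of $\eps$ then yields the factor $2$ in \eqref{volume doubling}. You should replace the per-ray exponentiation with this averaged Gronwall argument (or otherwise justify why a per-ray bound is available, which it is not under a purely integral hypothesis).
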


For $p$-Laplacian of radial function, we have the following comparison.
\begin{proposition}[$p$-Laplace comparison]
	If $f$ is a radial function such that $f' \le 0$, then
	\be
	\Delta_p f \ge 	\bar{\Delta}^K_p f + f'|f'|^{p-2} \psi.  \label{p-com}
	\ee
\end{proposition}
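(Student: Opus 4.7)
The plan is a direct computation: expand $\Delta_p f$ for a radial function using the chain/product rule, do the same in the model, subtract, and invoke the definition of $\psi$.

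First I would write $\nabla f = f'(r)\nabla r$, so $|\nabla f| = |f'|$, and then
\begin{equation*}
\Delta_p f = \diver\bigl(|f'|^{p-2} f' \, \nabla r\bigr) = \bigl(|f'|^{p-2} f'\bigr)' \, |\nabla r|^{2} + |f'|^{p-2} f' \, \Delta r,
\end{equation*}
where I used that $|f'|^{p-2}f'$ is radial so its gradient points along $\nabla r$ with magnitude $(|f'|^{p-2}f')'$, and $|\nabla r|=1$. The identical computation in the model space $\mathbb M_K^n$ yields
\begin{equation*}
\bar\Delta_p^K f = \bigl(|f'|^{p-2} f'\bigr)' + |f'|^{p-2} f' \, \bar\Delta^K r,
\end{equation*}
so taking the difference cancels the radial derivative terms and leaves
\begin{equation*}
\Delta_p f - \bar\Delta_p^K f = |f'|^{p-2} f' \, \bigl(\Delta r - \bar\Delta^K r\bigr).
\end{equation*}

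Now I would use the definition $\psi = (\Delta r - \bar\Delta^K r)_+$, which gives the pointwise inequality $\Delta r - \bar\Delta^K r \le \psi$. Because the hypothesis $f'\le 0$ forces $|f'|^{p-2}f' \le 0$, multiplying this inequality by $|f'|^{p-2}f'$ reverses its direction, producing
\begin{equation*}
|f'|^{p-2} f' \, \bigl(\Delta r - \bar\Delta^K r\bigr) \ge |f'|^{p-2} f' \, \psi,
\end{equation*}
which combined with the identity above gives exactly the claimed inequality \eqref{p-com}.

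There is no real obstacle in the algebra, but I would add a sentence noting that the computation is valid pointwise on $M\setminus(\{x_0\}\cup \mathrm{Cut}(x_0))$ where $r$ is smooth, and the resulting inequality extends in the barrier/distributional sense across the cut locus (exactly as in the classical Laplace comparison of Petersen--Wei), so that it can be integrated against test functions later in the paper. The only genuinely $p$-Laplacian-specific input is the sign condition $f'\le 0$, which is what converts the triangle-type bound $\Delta r - \bar\Delta^K r \le \psi$ into the stated lower bound for $\Delta_p f - \bar\Delta_p^K f$.
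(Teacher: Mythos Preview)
Your proof is correct and follows essentially the same route as the paper: both compute $\Delta_p f$ for a radial function to obtain the identity $\Delta_p f = \bar\Delta_p^K f + f'|f'|^{p-2}(\Delta r - \bar\Delta^K r)$ and then use $f'\le 0$ together with $\Delta r - \bar\Delta^K r \le \psi$ to conclude. The only cosmetic difference is that the paper expands the divergence via the Hessian formula \eqref{p-Lap} before specializing to radial $f$, whereas you apply the divergence directly to $|f'|^{p-2}f'\,\nabla r$; since $(|f'|^{p-2}f')' = (p-1)|f'|^{p-2}f''$, the two computations coincide line for line.
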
	
\begin{proof}
From the definition of the $p$-Laplacian \eqref{p-Lap-def},
\begin{align}
\begin{split}
\Delta_p f = \diver(|\nabla f|^{p-2}\nabla f) &= \langle \nabla|\nabla f|^{p-2},\nabla f\rangle + |\nabla f|^{p-2}\Delta f\\
&= (p-2)|\nabla f|^{p-4}\Hess f(\nabla f,\nabla f) + |\nabla f|^{p-2}\Delta f. \label{p-Lap}
\end{split}
\end{align}
Hence when $f = f(r)$ is a radial function
\begin{align}
\Delta_p f &= (p-2)|f'|^{p-2}f''+|f'|^{p-2}(f''+ \Delta r f')  \label{Lp-radial} \\
&= (p-1)|f'|^{p-2}f'' + \Delta r f'|f'|^{p-2} \nonumber \\
&= (p-1)|f'|^{p-2}f'' + \bar{\Delta}^K r f'|f'|^{p-2} +\left( \Delta r- \bar{\Delta}^K r\right) f'|f'|^{p-2} \nonumber \\
& = \bar{\Delta}^K_p r + \left( \Delta r- \bar{\Delta}^K r\right) f'|f'|^{p-2}. \nonumber
\end{align}
When $f' \le 0$, $\left( \Delta r- \bar{\Delta}^K r\right) f'|f'|^{p-2} \ge \psi f'|f'|^{p-2}$, which gives the estimate. 
\end{proof}
 
 Let $\bar{f} >0$ be the first eigenfunction for the Dirichlet problem for $\Delta_p$ in $B_K(r) \subset \mathbb{M}^n_K$. By \cite{delpino-manasevich} $\bar{f}$ is radial. Below we show that $\bar{f}$ is a decreasing function of the radius.  For $p\geq 2$, this was shown in \cite{matei}. Our proof is much shorter. 
\begin{lemma}
For $t \in (0, r)$ and $p>1$,   $\bar{f}'(t) \leq 0$.
\end{lemma}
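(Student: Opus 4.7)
The plan is to prove the lemma via a single integration-by-parts identity rather than any ODE or maximum-principle analysis. For $t \in (0,r)$, integrate the eigenvalue equation $\Delta_p \bar f = -\bar\lambda |\bar f|^{p-2}\bar f$ over the concentric geodesic sub-ball $B_K(t)$ and apply the divergence theorem. Since $\bar f$ is radial, on the sphere $\partial B_K(t)$ we have $\nabla \bar f = \bar f'(t)\,\partial_r$ and the outward normal is $\nu = \partial_r$, so the integrand $|\nabla \bar f|^{p-2}\langle \nabla \bar f,\nu\rangle = |\bar f'(t)|^{p-2}\bar f'(t)$ is constant on that sphere.

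This yields the identity
\begin{equation*}
|\bar f'(t)|^{p-2}\,\bar f'(t)\cdot\area(\partial B_K(t)) \;=\; -\bar\lambda\int_{B_K(t)} \bar f^{p-1}\, dV_K.
\end{equation*}
The right-hand side is strictly negative: $\bar\lambda > 0$ because the first Dirichlet eigenvalue on a bounded domain is strictly positive, and $\bar f > 0$ in the interior by assumption. Since the map $s \mapsto |s|^{p-2}s$ is sign-preserving for any $p > 1$ (it equals $|s|^{p-1}\operatorname{sgn}(s)$), we conclude $\bar f'(t) < 0$, which is strictly stronger than the stated $\bar f'(t) \le 0$.

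The only potential obstacle is regularity: one must know that $\bar f'(t)$ exists classically and that the divergence theorem applies. This is standard, since the first Dirichlet eigenfunction of the $p$-Laplacian on a ball is $C^{1,\alpha}$ in the interior (see \cite{lindqvist}), and radial symmetry then gives a genuine radial derivative at every $t\in(0,r)$. No case split on $p\ge 2$ versus $1<p<2$ is needed, which is the reason this argument is substantially shorter than the one in \cite{matei}.
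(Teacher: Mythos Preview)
Your proof is correct and is essentially identical to the paper's: integrating the eigenvalue equation over $B_K(t)$ via the divergence theorem is exactly the radial ODE integration $(\mathcal A |f'|^{p-2}f')' = -\lambda_1 \mathcal A |f|^{p-2}f$ from $0$ to $t$ that the paper performs, and both arrive at the same signed identity $\mathcal A(t)\,|f'|^{p-2}f'(t) = -\lambda_1\int_0^t |f|^{p-2}f\,\mathcal A \le 0$. The only difference is packaging (divergence theorem on the ball versus one-variable integration of the radial ODE); in particular your remark that no case split on $p$ is needed matches the paper's observation that this argument is much shorter than the one in \cite{matei}.
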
 

\begin{proof}
Write the volume element of $\mathbb{M}^n_K$ in geodesic polar coordinate $dvol = \mathcal{A}(t) dt d\theta_{S^{n-1}}$. As the first eigenfunction $\bar{f}$ is radial, by (\ref{Lp-radial}) it satisfies the ODE
\begin{equation}
(\mathcal{A}|f'|^{p-2}f')' = -\lambda_1\mathcal{A}|f|^{p-2}f.
\end{equation}
 As $\mathcal A (0) =0$ and $p>1$, integrating both sides from $0$ to $t$ we get
\begin{equation*}
\mathcal{A}|f'|^{p-2}f'(t) = -\lambda_1\int_0^t|f|^{p-2}f\mathcal{A} \leq 0.
\end{equation*}
\end{proof}

Now we are ready to prove Theorem~\ref{Cheng}.  
\begin{proof}
Let $\bar{f}$ be a first eigenfunction for the Dirichlet problem for $\Delta_p$ in $B_K(r) \subset \mathbb M^n_K$ with $\bar{f} (0) =1$.  Hence $0 \le \bar{f} \le 1$. Let $r= r(x) = d(x_0,x)$ be the distance function on $M$ centered at the point $x_0$.  Then $\bar{f}(r) \in W_0^{1,p}(B(x_0,r))$.
Denote $Q = \frac{\int_{B}|\nabla \bar{f}|^p}{\int_B |\bar{f}|^p}$, where $B:=B(x_0,r)$. 
 By (\ref{lambda}) we have,
 \be
 \lambda_{1,p}(B(x_0,r))  \le Q.
 \ee
Using integration by part, $\bar{f}' \leq 0$, $0 \le \bar{f} \le 1$ and the p-Laplacian comparison (\ref{p-com}), we have 
\begin{align*}
Q &= - \frac{\int_B \Delta_p \bar{f} \cdot \bar{f}}
{\int_B |\bar{f}|^p } \\
& \le - \frac{\int_B \bar{\Delta}^K_p \bar{f} \cdot \bar{f} + \psi \bar{f}'|\bar{f}'|^{p-2} \bar{f} }
{\int_B |\bar{f}|^p }\\
& = \bar{\lambda}_{1,p}(B_K(r)) - \frac{\int_B  \psi \bar{f}'|\bar{f}'|^{p-2} \bar{f} }
{\int_B |\bar{f}|^p } \\
& \le \bar{\lambda}_{1,p}(B_K(r)) + \frac{\int_B  \psi |\bar{f}'|^{p-1} }
{\int_B |\bar{f}|^p }.
\end{align*}
By H\"older inequality
\[\int_B  \psi |\bar{f}'|^{p-1} \le \left(\int_B  \psi^{p}\right)^{\frac{1}{p}} \left(\int_B |\bar{f}'|^{p}\right)^{1-\frac{1}{p}}.
\]
Let $r_0 = r_0(n,K,r) \in (0, r)$ such that $\bar{f}(r_0) =\frac 12$.   Then $\bar{f} \ge \frac 12$ on  $[0,r_0]$, and 
\[\int_B |\bar{f}|^p \ge \left( \int_B |\bar{f}|^p \right)^{1-\frac 1p} \cdot \left( \int_{B(x_0, r_0)} |\bar{f}|^p \right)^{\frac 1p} \ge \left( \int_B |\bar{f}|^p \right)^{1-\frac 1p} \cdot \left( \vol B(x_0, r_0) 2^{-p} \right)^{\frac 1p}.
\]
Hence the error term 
\ban 
\frac{\int_B  \psi |\bar{f}'|^{p-1} }
{\int_B |\bar{f}|^p } & \le &  2 \left(\frac{\int_B \psi^p}{\vol B(x_0, r_0)} \right)^{\frac 1p}  \cdot \left(\frac{ \int_B |\bar{f'}|^p}{ \int_B |\bar{f}|^p} \right)^{1-\frac 1p}   \\
& = & 2\, Q^{1-\frac 1p} \left(\fint_B \psi^p\right)^{\frac 1p} \left(\frac{\vol B(x_0,r)}{\vol B(x_0, r_0)} \right)^{\frac 1p} \\
& \le & 2\, Q^{1-\frac 1p} \| \psi\|^*_{2\bar{q}, B(x_0,r)} \left(\frac{\vol B(x_0,r)}{\vol B(x_0, r_0)} \right)^{\frac 1p}. 
\ean
Choose $\eps \le \eps_0$ in Theorem~\ref{com-int},  using (\ref{Laplacian-com-average-norm}) and (\ref{volume doubling}), and combining above, we have
\be \label{initialineq}
Q \le \bar{\lambda}_{1,p}(B_K(r)) + C(n,p,\bar{q},K,r ) \left( \|\Ric_-^K\|_{\bar{q}, B(x_0, r)}^*\right)^{\frac 12} Q^{1-\frac 1p}.
\ee

Applying Young's inequality to the last term, we have
\begin{equation*}
Q \leq \bar{\lambda}_{1,p}(B_K(r)) + \frac{1}{p}C(n,p,\bar{q},K,r ) \left( \|\Ric_-^K\|_{\bar{q}, B(x_0, r)}^*\right)^{\frac p2} + \frac{p-1}{p}Q.
\end{equation*}
Moving $Q$ to the left hand side, we obtain
\begin{equation*}
Q \leq p\bar{\lambda}_{1,p}(B_K(r)) +C(n,p,\bar{q},K,r ) \left( \|\Ric_-^K\|_{\bar{q}, B(x_0, r)}^*\right)^{\frac p2}.
\end{equation*}
Applying this to \eqref{initialineq} so that the $\displaystyle Q^{1-\frac{1}{p}}$ can be bounded in terms of the fixed quantities, we obtain
\begin{equation*}
Q \leq \bar{\lambda}_{1,p}(B_K(r)) +C(n,p,\bar{q},K,r ) \left( \|\Ric_-^K\|_{\bar{q}, B(x_0, r)}^*\right)^{\frac 12}.
\end{equation*}
\end{proof}

\section{Proof of Theorem~\ref{N-eigen-int}}
To prove Theorem~\ref{N-eigen-int}, we need the following Bochner formula for $p$ power. 
\begin{lemma}[$p$-Bochner]
\begin{align}
\begin{split}
\frac{1}{p}&\Delta (|\nabla f|^p)\\
&=(p-2)|\nabla f|^{p-2} |\nabla  | \nabla f | |^2 +\frac{1}{2}|\nabla f|^{p-2}\left\{  |\Hess f|^2 + \langle \nabla f,  \nabla \Delta f \rangle + \Ric(\nabla f,\nabla f) \right\}.   \label{p-bochner}
\end{split}
\end{align}
\end{lemma}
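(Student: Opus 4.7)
The plan is to reduce this $p$-power Bochner identity to the classical ($p=2$) Bochner formula by writing $|\nabla f|^p$ as a power of $h:=|\nabla f|^2$ and applying the product rule for the Laplacian. On the open set $\{\nabla f\neq 0\}$, where $h^{p/2}$ is smooth, the chain rule yields
\[
\Delta(h^{p/2}) = \diver\!\left(\tfrac{p}{2}\,h^{p/2-1}\nabla h\right) = \tfrac{p}{2}\!\left(\tfrac{p}{2}-1\right)h^{p/2-2}|\nabla h|^2 + \tfrac{p}{2}\,h^{p/2-1}\Delta h.
\]

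Next I would rewrite both pieces in terms of $|\nabla f|$ and $|\nabla|\nabla f||$. From $h=|\nabla f|^2$ one has $\nabla h = 2|\nabla f|\,\nabla|\nabla f|$, so $|\nabla h|^2 = 4|\nabla f|^2\,|\nabla|\nabla f||^2$; combined with $h^{p/2-2}=|\nabla f|^{p-4}$ and $h^{p/2-1}=|\nabla f|^{p-2}$, the first summand collapses, after dividing by $p$, to exactly $(p-2)|\nabla f|^{p-2}|\nabla|\nabla f||^2$. For the second summand I would invoke the classical Bochner identity
\[
\tfrac{1}{2}\Delta h = |\Hess f|^2 + \langle \nabla f,\nabla \Delta f\rangle + \Ric(\nabla f,\nabla f),
\]
substitute, and collect terms to arrive at the stated formula.

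The derivation is essentially careful bookkeeping, so there is no real obstacle in the computation itself. The one subtle point worth flagging is the critical set $\{\nabla f=0\}$: for non-integer $p$, $|\nabla f|^p$ and $|\nabla|\nabla f||$ need not be smooth there. This is harmless for the intended use in the Lichnerowicz-type proof of Theorem~\ref{N-eigen-int}, where the identity is applied pointwise on the open set $\{\nabla f\neq 0\}$ and then integrated against suitable test functions, the critical set being negligible (either by Sard-type arguments or by a standard $\varepsilon$-regularization replacing $|\nabla f|^p$ by $(|\nabla f|^2+\varepsilon)^{p/2}$ and passing to the limit).
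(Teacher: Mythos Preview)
Your proposal is correct and follows essentially the same route as the paper: write $|\nabla f|^p=(|\nabla f|^2)^{p/2}$, apply the chain rule for the Laplacian to a power, simplify the gradient term via $\nabla(|\nabla f|^2)=2|\nabla f|\,\nabla|\nabla f|$, and then invoke the classical Bochner formula. Your version is in fact slightly more explicit than the paper's (which compresses the chain-rule step into one line), and your remark about the critical set $\{\nabla f=0\}$ is a useful caveat that the paper omits.
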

 One can find this implicitly in the literature, see e.g. \cite[]{colding, naber-valtorta, song-wei-wu}. The proof is very simple, for completeness, we present it here. 
\begin{proof} One computes
\begin{align} 
\frac{1}{p}\Delta (|\nabla f|^p)  =   \frac{1}{p}\Delta (|\nabla f|^2)^{\frac p2}  
 =  (p-2)|\nabla f|^{p-2} |\nabla  | \nabla f | |^2 +\frac{1}{2}|\nabla f|^{p-2}\Delta(|\nabla f|^2).  \label{Lap-p}
\end{align}
Recall the Bochner formula
\begin{equation*}
\frac 12 \Delta(|\nabla f|^2)  = |\Hess f|^2 + \langle \nabla f,  \nabla \Delta f \rangle + \Ric(\nabla f,\nabla f),   \label{bochner}
\end{equation*}
Plugging this into (\ref{Lap-p}) gives (\ref{p-bochner}). 
\end{proof} 

We also need the following Sobolev inequality, which follows from Gallot's isoperimetric constant estimate for integral curvature \cite[]{gallot} and Aubry's diameter estimate \cite[]{aubry}.
\begin{proposition}
Given $q > \frac n2$ and $K>0$, there exists $\eps= \eps(n,q,K)$ such that if  $M^n$ is a complete manifold with $\|\Ric_-^K\|^*_q \le \eps$, then there is a constant $C_s(n,q,K)$ such that 
\begin{equation}
	\left(\fint_M f^{\frac{2q}{q-1}}\right)^{\frac{q-1}{q}} \leq C_s(n,q, K) \fint_M |\nabla f|^2 + 2 \fint_M f^2 \label{Sobolev}
	\end{equation}
	for all functions $f \in W^{1,2}$.
\end{proposition}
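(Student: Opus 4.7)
The plan is to deduce the Sobolev inequality by combining Aubry's diameter estimate with Gallot's integral-curvature isoperimetric inequality, and then promoting the resulting $L^1$-Sobolev inequality to the $L^2$ form via Federer--Fleming truncation. For $K>0$ and $\|\Ric_-^K\|_q^*$ sufficiently small, Aubry's theorem guarantees $\diam(M)\le D(n,q,K)$, so $M$ is closed with geometry controlled by $(n,q,K)$ alone. Gallot's estimate then gives a uniform lower bound on the Neumann isoperimetric profile of $M$,
$$\frac{\vol(\partial\Omega)}{\vol(M)} \ \ge\ h(n,q,K)\Bigl(\frac{\min\{\vol(\Omega),\vol(M\setminus\Omega)\}}{\vol(M)}\Bigr)^{(q-1)/q},$$
for every domain $\Omega$, with $h(n,q,K)>0$ provided $\eps$ is small enough.

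By the standard coarea argument this isoperimetric inequality implies the Neumann $L^1$--Sobolev inequality
$$\Bigl(\fint_M |u-\bar u|^{q/(q-1)}\Bigr)^{(q-1)/q}\ \le\ C_1(n,q,K)\fint_M |\nabla u|$$
for all $u\in W^{1,1}(M)$, where $\bar u$ denotes a median. Substituting $u=f^2$ and applying Cauchy--Schwarz to $\fint_M |\nabla u|=2\fint_M |f|\,|\nabla f|$ yields the mean-zero estimate
$$\Bigl(\fint_M |f^2-\overline{f^2}|^{q/(q-1)}\Bigr)^{(q-1)/q}\ \le\ 2C_1\Bigl(\fint_M |\nabla f|^2\Bigr)^{1/2}\Bigl(\fint_M f^2\Bigr)^{1/2}.$$
Restoring the mean by the triangle inequality in $L^{q/(q-1)}$, using Jensen's inequality $\overline{f^2}\le(\fint_M|f|^{2q/(q-1)})^{(q-1)/q}$ to absorb one copy of the left-hand side, and finally using Young's inequality $2ab\le\delta a^2+\delta^{-1}b^2$ with $\delta$ tuned so that the coefficient of $\fint_M f^2$ collapses to exactly $2$, one arrives at
$$\Bigl(\fint_M |f|^{2q/(q-1)}\Bigr)^{(q-1)/q}\ \le\ C_s(n,q,K)\fint_M |\nabla f|^2 + 2\fint_M f^2.$$

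The main technical point is ensuring that $C_s$ and the additive constant depend only on $(n,q,K)$, which requires tracking how Gallot's $h(n,q,K)$ and Aubry's $D(n,q,K)$ depend on $\eps$ and verifying that $\eps$ may be shrunk enough for the Young rebalancing to yield precisely the coefficient $2$. This normalization is combinatorial bookkeeping; the genuine geometric input is Gallot's uniform isoperimetric estimate together with Aubry's compactness. Once these two ingredients are in place, the remainder is the familiar pipeline \textit{isoperimetric $\Rightarrow$ $L^1$-Sobolev $\Rightarrow$ $L^2$-Sobolev with zeroth-order term}, and the claimed inequality follows.
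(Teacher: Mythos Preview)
Your approach is the same as the paper's: there the proposition is stated with only the remark that it follows from Gallot's isoperimetric constant estimate and Aubry's diameter bound, and you have correctly identified both inputs and sketched the standard passage isoperimetric $\Rightarrow$ $L^1$-Sobolev $\Rightarrow$ $L^2$-Sobolev.

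One step in your sketch is misstated, however. You claim that Jensen gives $\overline{f^2}\le\bigl(\fint_M|f|^{2q/(q-1)}\bigr)^{(q-1)/q}$ and that this lets you ``absorb one copy of the left-hand side,'' but substituting that bound into the triangle inequality $\|f^2\|_{q/(q-1)}\le\|f^2-\overline{f^2}\|_{q/(q-1)}+\overline{f^2}$ yields the vacuous $\|f^2\|_{q/(q-1)}\le(\text{gradient term})+\|f^2\|_{q/(q-1)}$. No absorption is needed: work with the \emph{mean} rather than a median (the Neumann $L^1$-Sobolev from the isoperimetric inequality holds with the mean after adjusting $C_1$ by a universal factor), so that $\overline{f^2}=\fint_M f^2$ already appears on the right. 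Then Young's inequality on the term $2C_1\bigl(\fint|\nabla f|^2\bigr)^{1/2}\bigl(\fint f^2\bigr)^{1/2}$ with $\delta=C_1$ produces coefficient $C_1^2$ on $\fint|\nabla f|^2$ and exactly $1+1=2$ on $\fint f^2$. With this correction the argument goes through.
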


Now we are ready to prove Theorem~\ref{N-eigen-int}.
\begin{proof}[Proof of Theorem \ref{N-eigen-int}] When $p=2$ the result is proved in \cite[]{aubry}. In the rest we assume $p>2$.
	
By Aubry's diameter estimate \cite[]{aubry}, $M$ is closed. Integrating (\ref{p-bochner}) on $M$ we have 
\begin{equation}
 0 = \fint_M |\nabla f|^{p-2}\left\{(p-2)|\nabla|\nabla f||^2+  |\Hess f|^2 + \langle \nabla f,  \nabla \Delta f \rangle + \Ric(\nabla f,\nabla f) \right\}.   \label{Boch-int} \end{equation}
For the Hessian term, using the Cauchy-Schwarz inequalities
 \begin{equation*}
 |\Hess(\nabla f,\nabla f)|^2 \leq |\nabla f|^4|\Hess f|^2, 
 \end{equation*}
 \begin{equation*}
 |\Delta f|^2 \leq n |\Hess f|^2
 \end{equation*}
 and the formula for $p$-Laplacian (\ref{p-Lap}), we have 
\begin{align*}
\fint_M|\nabla f|^{p-2}|\Hess f|^2 &\geq \fint_M|\nabla f|^{p-2}\frac{(\Delta f)^2}{n} \\
&=\frac{1}{n}\fint_M\Delta f\Delta_p f- \frac{p-2}{n}\fint_M\Delta f |\nabla f|^{p-4}\Hess f(\nabla f,\nabla f) \\
&  \geq \frac{1}{n}\fint_M\Delta f\Delta_p f- \frac{p-2}{n}\fint_M |\Delta f| |\nabla f|^{p-2}|\Hess f| \\
&\geq \frac{1}{n}\fint_M\Delta f\Delta_p f- \frac{p-2}{\sqrt{n}}\fint_M |\nabla f|^{p-2}|\Hess f|^2.
\end{align*}
Hence
\begin{equation}
\fint_M |\nabla f|^{p-2}|\Hess f|^2 \geq \frac{1}{n+ \sqrt{n}(p-2)}\fint \Delta f\Delta_p f.
\end{equation}
For the third term, 
\[ \fint_M|\nabla f|^{p-2}\langle\nabla f,\nabla (\Delta f)\rangle = -\fint_M\Delta_pf\Delta f.
\]
For the curvature term,
\begin{align*}
\fint_M|\nabla f|^{p-2}\Ric(\nabla f,\nabla f) &\geq (n-1)K\fint_M |\nabla f|^p - \fint_M|\Ric_-^K||\nabla f|^p \\
&\geq (n-1)K\fint_M |\nabla f|^p - \|\Ric_-^K\|_q^*\left(\fint_M |\nabla f|^{\frac{pq}{q-1}}\right)^{\frac{q-1}{q}}.
\end{align*}
Apply the Sobolev inequality (\ref{Sobolev}) to the function $|\nabla f|^{\frac p2}$ gives 
\begin{align*}
\left(\fint_M \left(|\nabla f|^{\frac{p}{2}}\right)^{\frac{2q}{q-1}}\right)^{\frac{q-1}{q}} &\leq C_s \fint_M|\nabla|\nabla f|^{\frac{p}{2}}|^2 + 2 \fint_M |\nabla f|^p \\
&=C_s\frac{p^2}{4}\fint_M|\nabla f|^{p-2}|\nabla|\nabla f||^2 + 2 \fint_M|\nabla f|^p.
\end{align*}
Plug these to (\ref{Boch-int}), we have 
\begin{align*}
0 &\geq - \frac{n-1+\sqrt{n}(p-2)}{n+ \sqrt{n}(p-2)}  \fint_M \Delta_pf \Delta f + ((n-1)K-2\|\Ric_-^K\|^*_q)\fint_M|\nabla f|^p \\
&\hspace{0.2 in}+\left((p-2)-C_s\|\Ric_-^K\|_q^*\frac{p^2}{4}\right)\fint_M|\nabla f|^{p-2}|\nabla|\nabla f||^2.
\end{align*}
Choosing $\|\Ric_-^K\|^*_q$ small so that $\left((p-2)-C_s\|\Ric_-^K\|^*_q\frac{p^2}{4}\right)\geq 0$. Then we can throw the last term away and get
\begin{equation}
0\geq  - \frac{n-1+\sqrt{n}(p-2)}{n+ \sqrt{n}(p-2)}  \fint_M \Delta_pf \Delta f + ((n-1)K-2\|\Ric_-^K\|^*_q)\fint_M|\nabla f|^p.
\end{equation}

Let $f$ be the first eigenfunction for $\Delta_p$, that is, $\Delta_p f = -\mu|f|^{p-2}f$.  Then
\begin{align*}
\fint_M \Delta_pf\Delta f &= -\mu\fint_M |f|^{p-2}f\Delta f \\
&=\mu\fint_M \langle\nabla(|f|^{p-2}f),\nabla f\rangle \\
&=\mu(p-1)\fint_M |f|^{p-2}|\nabla f|^2 \\
&\leq (p-1)\mu\left(\fint_M |f|^p\right)^{1-\frac{2}{p}}\left(\fint_M|\nabla f|^p\right)^{\frac{2}{p}} \\
&= (p-1)(\mu)^{\frac{2}{p}}\fint_M|\nabla f|^p,
\end{align*}
where we use the fact that $f$ is the first eigenfunction, so we have
\begin{equation*}
\fint_M |f|^p = \frac{1}{\mu}\fint_M|\nabla f|^p.
\end{equation*}
This gives
\begin{equation*}
(\mu)^{\frac{2}{p}}  \left[ (p-1)  \frac{n-1+\sqrt{n}(p-2)}{n+ \sqrt{n}(p-2)}  \right] \ge (n-1)K -  2 \|\Ric_-^K\|^*_q,  
\end{equation*}
which is (\ref{N-eigen}). 
\end{proof}

\section{Proof of Theorem \ref{Faber-Krahn}}
To prove the Faber-Krahn-type estimate, we will need a version of the Gromov-Levy isoperimetric inequality for integral curvature. The inequality follows from the following volume comparison for tubular neighborhood of hypersurface of Petersen-Sprouse.
\begin{proposition}[\cite{petersen-sprouse}, Lemma 4.1 ]\label{petersen-sprouse}
Suppose that $H\subset M$ is a hypersurface with constant mean curvature $\eta \geq 0$, and that $H$ divides $M$ into two domains $\Omega_{\pm}$, where $\Omega_+$ is the domain in which mean curvature is positive.  Furthermore, let $d_{\pm} > 0$ such that $d_+ + d_- \leq \diam(M) \leq D$ and $\Omega_{\pm} \subset B(H,d_{\pm})$.  Let $\bar{H} = S(x_0,r_0) \subset \mathbb M^n_K$, a sphere of constant positive mean curvature $\eta$, and let $\bar{\Omega}_+ = B(x_0,D) - B(x_0,r_0)$, $\bar{\Omega}_- = B(x_0,r_0)$.  Finally assume that $d_+ \leq D- r_0$ and $d_- \leq r_0$. Then for any $\alpha > 1$, there is an $\eps(n,p,\alpha, K) > 0$ such that if $\|\Ric_-^K\|_q^* \leq \eps$, then
\begin{equation*}
\vol(\Omega_{\pm}) \leq \alpha \frac{\area(H)}{\area(\bar{H})}\vol(\bar{\Omega}_{\pm}).
\end{equation*}
\end{proposition}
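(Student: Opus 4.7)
The plan is to adapt the classical proof of the Gromov--L\'evy isoperimetric inequality --- which compares the Jacobian of the normal exponential map from $H$ to that of the corresponding model hypersurface --- to the integral Ricci setting by replacing the pointwise Heintze--Karcher comparison with an averaged version in the spirit of \eqref{Laplacian-com-average-norm}.

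First, I would parametrize $\Omega_\pm$ via the normal exponential map $E_\pm(y,t)=\exp_y(t\nu_\pm(y))$ from $H$, with $\nu_\pm$ the unit normal pointing into $\Omega_\pm$; up to a measure-zero set this is a surjection for $0\le t\le c_\pm(y)\le d_\pm$, where $c_\pm(y)$ is the focal/cut distance. Writing the volume element as $\mathcal J_\pm(y,t)\,dt\,dA_H(y)$, the mean curvature $m(y,t)$ of parallel hypersurfaces satisfies $\partial_t\log\mathcal J_\pm = -m$ and the Riccati inequality $m' \ge \tfrac{m^2}{n-1} + \Ric(\partial_t,\partial_t)$. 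Let $\bar m(t)$ denote the analogous model mean curvature, satisfying equality with $\Ric$ replaced by $(n-1)K$ and with matching initial value $\bar m(0)=m(y,0)$; then on the set $\{\bar m\ge m\}$, subtracting gives the linear inequality
\[
(\bar m - m)' \;\le\; \tfrac{\bar m+m}{n-1}(\bar m - m) + \Ric_-^K(\partial_t,\partial_t),
\]
and Gronwall along each normal geodesic produces a pointwise bound $\phi(y,t) \le C\int_0^t \Ric_-^K(E_\pm(y,s))\,ds$ for $\phi := (\bar m - m)_+$.

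The crux is then to convert this radial estimate into a tubular $L^{2q}$ bound of the form $\|\phi\|^*_{2q,B(H,d_\pm)}\le C(n,q,K,D)(\|\Ric_-^K\|_q^*)^{1/2}$ --- the tubular analog of \eqref{Laplacian-com-average-norm}. This uses Fubini with $\mathcal J_\pm$ as the weight that converts radial integration of $\Ric_-^K$ back into integration over $\Omega_\pm$, together with the volume doubling from Theorem~\ref{com-int} to keep the Jacobian weight bounded. Once $\phi$ is controlled, integrating $\partial_t\log(\mathcal J_\pm/\bar{\mathcal J})=\bar m - m \le \phi$ gives $\mathcal J_\pm(y,t)\le \bar{\mathcal J}(t)\exp(\int_0^t\phi(y,s)\,ds)$; then $e^x\le 1+xe^x$ and H\"older yield $\int_0^{c_\pm(y)}\mathcal J_\pm(y,t)\,dt \le (1+F(y))\int_0^{d_\pm}\bar{\mathcal J}(t)\,dt$, with $\int_H F\,dA_H$ bounded by an appropriate power of $\|\phi\|^*_{2q,B(H,d_\pm)}$. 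The constraints $d_+\le D-r_0$ and $d_-\le r_0$ ensure $\int_0^{d_\pm}\bar{\mathcal J}(t)\,dt \le \vol(\bar\Omega_\pm)/\area(\bar H)$, so integrating over $y\in H$ and choosing $\eps$ so that $1+C\eps^{1/2}\le \alpha$ completes the proof.

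The principal obstacle is this tubular averaging step: the hypothesis $\|\Ric_-^K\|_q^*\le\eps$ is a global ambient quantity, whereas the Riccati analysis gives only a radial-in-$t$ bound along normal geodesics from $H$. Translating by Fubini requires a bootstrap --- first crudely bound $\mathcal J_\pm$ via the volume doubling of Theorem~\ref{com-int}, then use this to absorb the Jacobian weight and upgrade the pointwise Gronwall estimate to the averaged $L^{2q}$ form. A minor technical point is the focal/cut locus of $H$, which is harmless since it has measure zero in the parametrization and $\Omega_\pm\subset B(H,d_\pm)$ keeps all radial integrals inside a region where the comparison constants can be bounded independently of $y$.
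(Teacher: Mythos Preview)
The paper does not supply its own proof of this proposition; it is quoted directly from \cite{petersen-sprouse}, Lemma~4.1, and used as a black box. Your sketch is essentially the argument of that reference: parametrize $\Omega_\pm$ by the normal exponential map from $H$, control the excess mean curvature $\phi$ of the parallel hypersurfaces via the Riccati inequality, convert this into an integral bound governed by $\|\Ric_-^K\|_q^*$, and then integrate the Jacobian against the model.

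One technical remark on the ``crux'' step. In \cite{petersen-sprouse} (following \cite{petersen-wei97}) the passage from the Riccati inequality to the $L^{2q}$ control of $\phi$ is not done by first applying Gronwall along each geodesic and then Fubini, but rather by multiplying the differential inequality for $\phi$ by $\phi^{2q-1}\mathcal J_\pm$ and integrating over the tube directly; the quadratic term $\phi^2/(n-1)$ in the Riccati inequality, which you dropped in passing to a linear inequality, is precisely what produces the extra power needed to close the estimate with $q>n/2$. That route sidesteps the bootstrap you describe, since the Jacobian weight is carried inside the differential inequality from the outset and no preliminary bound on $\mathcal J_\pm$ is required. Your Gronwall-then-average strategy can also be made to work, but it is slightly less direct and the appeal to volume doubling for the bootstrap is not actually needed.
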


Using this, the Gromov-Levy isoperimetric inequality for the integral curvature case can be shown by following the original proof given in \cite{gromov} page 522 and keeping track of the error term coming from the integral curvature.
\begin{proposition}\label{gromov-levy}
Let $\Omega\subset M$ be a domain.  Then for any $\alpha >1$, there is an $\eps = \eps(n,p,\alpha,K) >0 $ such that if $\|\Ric_-^K\|^*_{q}\leq \eps$, then
\begin{equation*}
\area(\dd B_K(r_0)) \leq \alpha \area (\dd\Omega)\frac{\vol(B_K(r_0))}{\vol(\Omega)} ,
\end{equation*}
where $B_K(r_0)\subset \mathbb{M}_K^n$ is the ball of radius $r_0$ in constant curvature $K$ space.  When $\|\Ric_-^K\|_q^* = 0$, we can take $\alpha =1$.
\end{proposition}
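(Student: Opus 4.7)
The plan is to adapt Gromov's original proof of the Levy-Gromov isoperimetric inequality (\cite{gromov}, p.~522), substituting the Petersen-Sprouse tube comparison (Proposition~\ref{petersen-sprouse}) for the classical sharp Heintze-Karcher comparison that would be available under a pointwise curvature bound. Throughout, $r_0$ denotes the radius determined by $\vol(B_K(r_0))/\vol(\mathbb{M}_K^n) = \vol(\Omega)/\vol(M)$, as in the Faber-Krahn setup of Theorem~\ref{Faber-Krahn}.

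By Aubry's diameter estimate, once $\eps$ is sufficiently small $M$ is closed with $\diam(M)\le \pi/\sqrt{K}+o(1)$. Using existence and regularity of isoperimetric minimizers on a closed manifold, I may replace $\Omega$ by a minimizer of $\area(\partial\,\cdot\,)$ with the same enclosed volume; its boundary $H$ then has constant mean curvature $\eta$. After possibly swapping $\Omega$ with $M\setminus\Omega$ (and updating $r_0$ accordingly), assume $\eta\ge 0$, and let $s\in (0,\pi/(2\sqrt{K})]$ be the unique radius for which $\partial B_K(s)\subset\mathbb{M}_K^n$ has mean curvature $\eta$.

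Next, apply Proposition~\ref{petersen-sprouse} with $d_-=\sup_{x\in\Omega}d(x,H)$ and $d_+=\sup_{x\in M\setminus\Omega}d(x,H)$. An integral-curvature Heintze-Karcher comparison, together with the diameter bound above, guarantees $d_-\le s$ and $d_+\le D-s$ up to $o(1)$, so Proposition~\ref{petersen-sprouse} applies and yields
\[
\vol(\Omega)\le\alpha'\,\frac{\area(H)}{\area(\partial B_K(s))}\,\vol(B_K(s)),
\]
\[
\vol(M)-\vol(\Omega)\le\alpha'\,\frac{\area(H)}{\area(\partial B_K(s))}\bigl(\vol(\mathbb{M}_K^n)-\vol(B_K(s))\bigr),
\]
with $\alpha'\to 1$ as $\eps\to 0$. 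Summing these gives $\area(\partial B_K(s))/\vol(\mathbb{M}_K^n)\le\alpha'\,\area(H)/\vol(M)$. Combining the first display with the volume-ratio hypothesis forces $\vol(B_K(r_0))\le\alpha''\,\vol(B_K(s))$ for some $\alpha''$ close to $1$, hence $r_0\le s(1+o(1))$. Since $s\le\pi/(2\sqrt{K})$, the area function $r\mapsto\area(\partial B_K(r))$ is monotone on $[0,s]$ and attains its maximum at $\pi/(2\sqrt{K})$, so $\area(\partial B_K(r_0))\le(1+o(1))\,\area(\partial B_K(s))$. Chaining these estimates and using $\vol(B_K(r_0))/\vol(\Omega)=\vol(\mathbb{M}_K^n)/\vol(M)$ delivers $\area(\partial B_K(r_0))\le\alpha\,\area(\partial\Omega)\,\vol(B_K(r_0))/\vol(\Omega)$ once all errors are absorbed into a single factor $\alpha>1$.

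The main obstacle will be the bookkeeping required to collapse the several accumulated small error factors — from Proposition~\ref{petersen-sprouse}, from the inradius comparison, and from the area-monotonicity step at $s=\pi/(2\sqrt K)$ — into a single loss factor $\alpha>1$ by choosing $\eps$ small, together with a clean verification of the inradius bounds $d_-\le s$, $d_+\le D-s$ in the integral-curvature setting. The classical equality case $\|\Ric_-^K\|_q^*=0$, with $\alpha=1$, is recovered by setting $\alpha'=\alpha''=1$ throughout.
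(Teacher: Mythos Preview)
Your proposal is correct and takes essentially the same approach as the paper: the paper's own proof is only a one-line remark that the result follows by running Gromov's argument (\cite{gromov}, p.~522) while using Proposition~\ref{petersen-sprouse} to control the error from the integral curvature assumption, and you have supplied precisely that outline with details. Your identification of $r_0$ via the volume-ratio normalization and your flagged concern about verifying the inradius hypotheses $d_-\le s$, $d_+\le D-s$ under integral curvature are both appropriate; the paper does not address these details either.
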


Now we prove Theorem~\ref{Faber-Krahn},  the Faber-Krahn inequality.
\begin{proof}
Without loss of generality, we can suppose that our test functions are Morse functions to ensure that the level sets of $f$ are closed regular hypersurfaces for almost all values.
 Let $\Omega_t := \{x \in \Omega \ | \ f > t\}$ and consider the decreasing rearrangement of $f$ defined by 
\begin{equation*}
\bar f(s) = \inf\{ t\geq 0 \ | \ |\Omega_t|<s \}
\end{equation*}
It is a non-increasing function on $[0,|\Omega|]$.  We define the spherical rearrangement $\bar\Omega$ of $\Omega$ as the ball in $\mathbb M_K^n$ centered at some fixed point such that  $\beta|\bar\Omega| = |\Omega|$, where $\beta := \frac{\vol(M)}{\vol (\mathbb M_K^n)}$. By abuse of notation, we define the spherical decreasing rearrangement $\bar f:\bar \Omega \to \mathbb{R}$ to be 
\begin{equation*}
\bar f(x) = \bar f(C_n|x|^n)
\end{equation*}
for $x \in \bar\Omega$, where $|x|$ is the distance from the center of $\bar\Omega$ and $C_n$ is the volume $S_K^n$. 
Note that
\begin{equation}\label{measureequality}
\vol(\{ f > t\}) = \vol (\{\bar f> t\}).
\end{equation}
 Now by construction, we have
\begin{equation*}
\int_{\Omega}f^p = \int_0^{|\Omega|}(\bar f(s))^pds = \beta\int_{\bar\Omega}(\bar f)^p.
\end{equation*}
Next we want to compare the $L^p$ norm of $\nabla f$ and $\nabla \bar f$.  Now $\dd\Omega_t = \{ x \in \Omega \ | \ f= t\}$ and since $\bar f$ is a radial function, we have
\begin{equation*}
|\nabla \bar f| = \left|\frac{\dd \bar f}{\dd r}\right| 
\end{equation*}
which is a constant on $\dd\bar \Omega_t$. By H\"older's inequality, we have
\begin{align*}
\vol(\{f=t\}) &= \int_{\{f=t\}} 1\\
&= \int_{\{f=t\}} \frac{|\nabla f|^{\frac{p-1}{p}}}{|\nabla f|^{\frac{p-1}{p}}} \\
&\leq \left(\int_{\{f=t\}} \frac{1}{|\nabla f|}  \right)^{\frac{p-1}{p}}\left( \int_{\{f=t\}} |\nabla f|^{p-1} \right)^{\frac{1}{p}}.
\end{align*}
By Proposition \ref{gromov-levy}
\begin{equation*}
\alpha \vol(\{f = t\}) \geq  \vol(\{\bar f = t\})
\end{equation*}
for some $\alpha > 1$. 
We have
\begin{align*}
\vol(\{\bar f=t\}) &= \int_{\{\bar f=t\}}1 \\
&= \left(\int_{\{\bar f=t\}} \frac{1}{|\nabla \bar f|}  \right)^{\frac{p-1}{p}}\left( \int_{\{\bar f=t\}} |\nabla \bar f|^{p-1} \right)^{\frac{1}{p}}.
\end{align*}
By the co-area formula, we have
\begin{align*}
\frac{\dd}{\dd t}\vol(\{f > t\}) &= \frac{\dd}{\dd t} \int_{\{f > t\}} 1 \\
&= \frac{\dd}{\dd t} \int_{t}^\infty \left(\int_{\{f=c\}}\frac{1}{|\nabla f|}\right)dc \\
&= -\int_{\{f=t\}}\frac{1}{|\nabla f|}.
\end{align*}
and similarly for $\bar f$ with \eqref{measureequality} so that
\begin{equation*}
\int_{\{f=t\}}\frac{1}{|\nabla f|} =  \int_{\{\bar f =t\}}\frac{1}{|\nabla \bar f|}.
\end{equation*}
Combining and applying the co-area formula once more to integrate over $\Omega$, we obtain
\begin{equation*}
\alpha^{p}\int_\Omega|\nabla f|^p \geq \int_{\bar \Omega} |\nabla \bar f|^p
\end{equation*}
and by the Rayleigh quotient, we have
\begin{align*}
\frac{\int_\Omega |\nabla f|^p}{\int_\Omega |f|^p} \geq \frac{1}{\alpha^p}\frac{\int_{\bar\Omega}|\nabla \bar f|^p}{\int_{\bar \Omega}|\bar f|^p} \geq \frac{1}{\alpha^p}\lambda_{1,p}(\bar \Omega).
\end{align*}
\end{proof}

To get Theorem \ref{optimal-est} from Theorem \ref{Faber-Krahn}, one follows the argument given in \cite{matei}.  One first shows the relation between the first non-trivial Neumann eigenvalue and the first Dirichlet eigenvalue of its nodal domain.  Namely, let $f$ be a first nontrivial Neumann eigenfunction of $\Delta_p$ on $M$ with $p>1$, let $A_+ = f^{-1}(\mathbb{R}_+)$ and $A_- = f^{-1}(\mathbb{R}_-)$ be the nodal domains of $f$.  Then
\begin{equation*}
\mu_{1,p}(M) = \lambda_{1,p}(A_+) = \lambda_{1,p}(A_-).
\end{equation*}
Using the fact that the nodal domains of $\Delta_p$ for the first nontrival Neumann eigenfunction on spheres $M_K^n$ are hemispheres $S^n_{K,\pm}$, in particular we have
\begin{equation*}
\mu_{1,p}(M^n_K) = \lambda_{1,p}(S^n_{K,\pm}).
\end{equation*}
Applying the Faber-Krahn-type estimate (Theorem \ref{Faber-Krahn}) to the nodal domain, we get Theorem \ref{optimal-est}.

\begin{bibdiv}
\begin{biblist}

\bib{andrews-survey}{article}{
   author={Andrews, Ben},
   title={Moduli of continuity, isoperimetric profiles, and multi-point
   estimates in geometric heat equations},
   conference={
      title={Surveys in differential geometry 2014. Regularity and evolution
      of nonlinear equations},
   },
   book={
      series={Surv. Differ. Geom.},
      volume={19},
      publisher={Int. Press, Somerville, MA},
   },
   date={2015},
   pages={1--47},
   review={\MR{3381494}},
}

\bib{andrews-clutterbuck}{article}{
   author={Andrews, Ben},
   author={Clutterbuck, Julie},
   title={Sharp modulus of continuity for parabolic equations on manifolds
   and lower bounds for the first eigenvalue},
   journal={Anal. PDE},
   volume={6},
   date={2013},
   number={5},
   pages={1013--1024},
   issn={2157-5045},
   review={\MR{3125548}},
}

\bib{aubry}{article}{
   author={Aubry, Erwann},
   title={Finiteness of $\pi_1$ and geometric inequalities in almost
   positive Ricci curvature},
   language={English, with English and French summaries},
   journal={Ann. Sci. \'Ecole Norm. Sup. (4)},
   volume={40},
   date={2007},
   number={4},
   pages={675--695},
   issn={0012-9593},
   review={\MR{2191529}},
}

\bib{cavalletti-mondino}{article}{
   author={Cavalletti, Fabio},
   author={Mondino, Andrea},
   title={Sharp geometric and functional inequalities in metric measure
   spaces with lower Ricci curvature bounds},
   journal={Geom. Topol.},
   volume={21},
   date={2017},
   number={1},
   pages={603--645},
   issn={1465-3060},
   review={\MR{3608721}},
}

\bib{cheng}{article}{
   author={Cheng, Shiu Yuen},
   title={Eigenvalue comparison theorems and its geometric applications},
   journal={Math. Z.},
   volume={143},
   date={1975},
   number={3},
   pages={289--297},
   issn={0025-5874},
   review={\MR{0378001}},
}

\bib{colding}{article}{
   author={Colding, Tobias Holck},
   title={New monotonicity formulas for Ricci curvature and applications. I},
   journal={Acta Math.},
   volume={209},
   date={2012},
   number={2},
   pages={229--263},
   issn={0001-5962},
   review={\MR{3001606}},
}

\bib{delpino-manasevich}{article}{
   author={del Pino, Manuel A.},
   author={Man\'asevich, Ra\'ul F.},
   title={Global bifurcation from the eigenvalues of the $p$-Laplacian},
   journal={J. Differential Equations},
   volume={92},
   date={1991},
   number={2},
   pages={226--251},
   issn={0022-0396},
   review={\MR{1120904}},
   doi={10.1016/0022-0396(91)90048-E},
}

\bib{gallot}{article}{
   author={Gallot, Sylvestre},
   title={Isoperimetric inequalities based on integral norms of Ricci
   curvature},
   note={Colloque Paul L\'evy sur les Processus Stochastiques (Palaiseau,
   1987)},
   journal={Ast\'erisque},
   number={157-158},
   date={1988},
   pages={191--216},
   issn={0303-1179},
   review={\MR{976219}},
}

\bib{gromov}{book}{
   author={Gromov, Misha},
   title={Metric structures for Riemannian and non-Riemannian spaces},
   series={Progress in Mathematics},
   volume={152},
   note={Based on the 1981 French original [ MR0682063 (85e:53051)];
   With appendices by M.\ Katz, P.\ Pansu and S.\ Semmes;
   Translated from the French by Sean Michael Bates},
   publisher={Birkh\"auser Boston, Inc., Boston, MA},
   date={1999},
   pages={xx+585},
   isbn={0-8176-3898-9},
   review={\MR{1699320}},
}

\bib{lindqvist}{book}{
   author={Lindqvist, Peter},
   title={Notes on the $p$-Laplace equation},
   series={Report. University of Jyv\"askyl\"a Department of Mathematics and
   Statistics},
   volume={102},
   publisher={University of Jyv\"askyl\"a, Jyv\"askyl\"a},
   date={2006},
   pages={ii+80},
   isbn={951-39-2586-2},
   review={\MR{2242021}},
}

\bib{matei}{article}{
   author={Matei, Ana-Maria},
   title={First eigenvalue for the $p$-Laplace operator},
   journal={Nonlinear Anal.},
   volume={39},
   date={2000},
   number={8, Ser. A: Theory Methods},
   pages={1051--1068},
   issn={0362-546X},
   review={\MR{1735181}},
}

\bib{naber-valtorta}{article}{
   author={Naber, Aaron},
   author={Valtorta, Daniele},
   title={Sharp estimates on the first eigenvalue of the $p$-Laplacian with
   negative Ricci lower bound},
   journal={Math. Z.},
   volume={277},
   date={2014},
   number={3-4},
   pages={867--891},
   issn={0025-5874},
   review={\MR{3229969}},
}

\bib{petersen-sprouse}{article}{
   author={Petersen, Peter},
   author={Sprouse, Chadwick},
   title={Integral curvature bounds, distance estimates and applications},
   journal={J. Differential Geom.},
   volume={50},
   date={1998},
   number={2},
   pages={269--298},
   issn={0022-040X},
   review={\MR{1684981}},
}

\bib{petersen-wei97}{article}{
   author={Petersen, P.},
   author={Wei, G.},
   title={Relative volume comparison with integral curvature bounds},
   journal={Geom. Funct. Anal.},
   volume={7},
   date={1997},
   number={6},
   pages={1031--1045},
   issn={1016-443X},
   review={\MR{1487753}},
}

\bib{petersen-wei00}{article}{
   author={Petersen, Peter},
   author={Wei, Guofang},
   title={Analysis and geometry on manifolds with integral Ricci curvature
   bounds. II},
   journal={Trans. Amer. Math. Soc.},
   volume={353},
   date={2001},
   number={2},
   pages={457--478},
   issn={0002-9947},
   review={\MR{1709777}},
   doi={10.1090/S0002-9947-00-02621-0},
}

\bib{song-wei-wu}{article}{
   author={Song, Bingyu},
   author={Wei, Guofang},
   author={Wu, Guoqiang},
   title={Monotonicity formulas for the Bakry-Emery Ricci curvature},
   journal={J. Geom. Anal.},
   volume={25},
   date={2015},
   number={4},
   pages={2716--2735},
   issn={1050-6926},
   review={\MR{3427145}},
}

\bib{valtorta}{article}{
   author={Valtorta, Daniele},
   title={Sharp estimate on the first eigenvalue of the $p$-Laplacian},
   journal={Nonlinear Anal.},
   volume={75},
   date={2012},
   number={13},
   pages={4974--4994},
   issn={0362-546X},
   review={\MR{2927560}},
}

\bib{wang}{article}{
   author={Wang, Lin Feng},
   title={Eigenvalue estimate for the weighted $p$-Laplacian},
   journal={Ann. Mat. Pura Appl. (4)},
   volume={191},
   date={2012},
   number={3},
   pages={539--550},
   issn={0373-3114},
   review={\MR{2958348}},
}

\bib{wang-li}{article}{
   author={Wang, Yu-Zhao},
   author={Li, Huai-Qian},
   title={Lower bound estimates for the first eigenvalue of the weighted
   $p$-Laplacian on smooth metric measure spaces},
   journal={Differential Geom. Appl.},
   volume={45},
   date={2016},
   pages={23--42},
   issn={0926-2245},
   review={\MR{3457386}},
}


\bib{Zhang2007}{article}{
    AUTHOR = {Zhang, HuiChun},
     TITLE = {Lower bounds for the first eigenvalue of the {$p$}-{L}aplacian
              on compact manifolds with positive {R}icci curvature},
   JOURNAL = {Nonlinear Anal.},
  FJOURNAL = {Nonlinear Analysis. Theory, Methods \& Applications. An
              International Multidisciplinary Journal},
    VOLUME = {67},
      YEAR = {2007},
    NUMBER = {3},
     PAGES = {795--802},
}

\end{biblist}
\end{bibdiv}

\end{document}